\def\E{\mathbb{E}}
\def\P{\mathbb{P}}
\newtheorem{thm}{Theorem}
\newtheorem{prop}[thm]{Proposition}
\newtheorem{defin}[thm]{Definition}
\newtheorem{lemma}[thm]{Lemma}
\newtheorem{conj}[thm]{Conjecture}
\newtheorem{cor}[thm]{Corollary}
\begin{document}

\title[Cycles of specified normalized length]{The number of cycles of specified normalized length in permutations}
 
\author{Michael Lugo}
\address{Department of Mathematics, University of Pennsylvania, 209 South 33rd Street, Philadelphia, PA 19104}
\email{mlugo@math.upenn.edu}

\subjclass[2000]{05A16, 60C05}

\date{September 15, 2009}

\maketitle

\begin{abstract} We compute the limiting distribution, as $n \to \infty$, of the number of cycles of length between $\gamma n$ and $\delta n$ in a permutation of $[n]$ chosen uniformly at random, for constants $\gamma, \delta$ such that $1/(k+1) \le \gamma < \delta \le 1/k$ for some integer $k$.  This distribution is supported on $\{0, 1, \ldots, k\}$ and has 0th, 1st, ..., $k$th moments equal to those of a Poisson distribution with parameter $\log {\delta \over \gamma}$.  For more general choices of $\gamma, \delta$ we show that such a limiting distribution exists, which can be given explicitly in terms of certain integrals over intersections of hypercubes with half-spaces; these integrals are analytically intractable but a recurrence specifying them can be given.   The results herein provide a basis of comparison for similar statistics on restricted classes of permutations. \end{abstract}

The distribution of the number of $k$-cycles in a permutation of $[n]$, for a fixed $k$, converges to a Poisson distribution with mean $1/k$ as $k \to \infty$.  In particular the mean number of $k$-cycles and the variance of the number of $k$-cycles are both $1/k$ whenever $n \ge k$ and $n \ge 2k$ respectively.  If instead of holding $k$ constant we let it vary with $n$, the number of $\alpha n$-cycles in permutations of $[n]$ approaches zero as $n \to \infty$ with $\alpha$ fixed.  So to investigate the number of cycles of long lengths, we must rescale and look at many cycle lengths at once.  In particular, we consider the number of cycles with length in some interval $[\gamma n, \delta n]$ as $n \to \infty$.  The expectation of the number of cycles with length in this interval is $\sum_{k = \gamma n}^{\delta n} 1/k$, which approaches the constant $\log \delta/\gamma$ as $n$ grows large.  By analogy with the fixed-$k$ case we might expect the number of cycles with length in this interval to be Poisson-distributed.  But this cannot be the case, because there is room for at most $1/\gamma$ cycles of length at least $\gamma n$, and the Poisson distribution can take arbitrarily large values.  In the case where $1/\gamma$ and $1/\delta$ lie in the same interval $[1/(k+1), 1/k]$ for some integer $k$, the limit distribution has the same first $k$ moments as Poisson($\log \delta/\gamma$).  For general $\gamma$ and $\delta$ the situation is considerably more complex but a limit distribution still exists.

In this paper, a ``random permutation of $[n]$'' will always mean a permutation chosen {\it uniformly} at random, and all expectations, distributions, etc. are relative to this choice of probability measure on $S_n$, the set of permutations of $n$.  The moments of the distributions of the number of $k$-cycles will be very useful, as we will initially express the limit distributions in terms of their moments.  We will need both individual and joint moments for the number of $k$-cycles, which we collect here.  Let $X_k^{(n)}$ denote the number of $k$-cycles in a random permutation of $[n]$.  Recall that $(z)_k = z(z-1)\ldots(z-k+1)$ denotes the ``$k$th falling power'' or ``$k$th factorial power'' of $z$.  This notation can be applied to random variables as well. 

\begin{prop}\label{prop:joint-moments}  Let $k_1, k_2, \ldots, k_s$ be distinct integers in $[1, n]$.  Let $r_1, \ldots, r_s$ be positive integers.  Then
\[
\E \left( \prod_{i=1}^s \left( X_{k_i}^{(n)} \right)_{r_i} \right) = \prod_{i=1}^s {1 \over k_i^{r_i}}
\]
if $n \ge \sum_{i=1}^s k_i r_i$, and zero otherwise. \end{prop}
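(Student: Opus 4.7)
\medskip

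\noindent\textbf{Proof plan.} The natural approach is via indicator random variables. For each $k$-subset $S \subseteq [n]$ and each cyclic arrangement $C$ of $S$, let $\mathbf{1}_C$ be the indicator of the event that $C$ appears as a cycle of the random permutation $\sigma$. Then $X_k^{(n)} = \sum_C \mathbf{1}_C$, the sum running over all $n!/((n-k)! \, k) = (n)_k / k$ possible $k$-cycles on $[n]$. The falling factorial $(X_k^{(n)})_{r}$ equals the number of ordered $r$-tuples of \emph{distinct} $k$-cycles of $\sigma$, so
\[
\bigl( X_k^{(n)} \bigr)_r = \sum_{(C_1,\ldots,C_r)} \mathbf{1}_{C_1} \cdots \mathbf{1}_{C_r},
\]
where the sum is over ordered tuples of distinct $k$-cycles. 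The analogous expansion holds for the product over $i$.

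\medskip

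\noindent First I would observe that $\mathbf{1}_{C_1} \mathbf{1}_{C_2} \cdots = 0$ unless the underlying supports of the cycles are pairwise disjoint, since two cycles of a permutation that share an element must coincide. Thus, upon taking expectations, only the ordered tuples $(C_{i,j})_{1\le i \le s,\, 1\le j \le r_i}$ of pairwise vertex-disjoint cycles, with $C_{i,j}$ of length $k_i$, contribute. Setting $m = \sum_i k_i r_i$, the number of such tuples is
\[
\frac{n!}{(n-m)!\,\prod_i (k_i!)^{r_i}} \cdot \prod_i \bigl((k_i-1)!\bigr)^{r_i} \;=\; \frac{n!}{(n-m)!} \prod_i \frac{1}{k_i^{r_i}},
\]
the first factor counting ordered sequences of disjoint subsets of the correct sizes and the second counting cyclic arrangements on each subset. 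Clearly this count is $0$ when $m > n$, yielding the second clause of the proposition.

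\medskip

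\noindent Next I would compute $\E(\mathbf{1}_{C_{1,1}} \cdots \mathbf{1}_{C_{s,r_s}})$ for a fixed disjoint tuple. This is the probability that $\sigma$ contains all the prescribed cycles, equivalently that $\sigma$ restricts to these cycles on the union of their supports (of size $m$) and acts arbitrarily on the complement. The number of such permutations is $(n-m)!$, so the probability is $(n-m)!/n!$. Multiplying the count of tuples by this probability gives
\[
\E \Bigl( \prod_i (X_{k_i}^{(n)})_{r_i} \Bigr) \;=\; \frac{n!}{(n-m)!} \prod_i \frac{1}{k_i^{r_i}} \cdot \frac{(n-m)!}{n!} \;=\; \prod_i \frac{1}{k_i^{r_i}},
\]
completing the proof.

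\medskip

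\noindent There is no real obstacle here: the main care is the bookkeeping in the counting step, specifically verifying the cancellation between the multinomial factor counting disjoint label-sets and the $(k_i!)^{r_i}$ arising from converting subsets to cyclic arrangements. The disjointness of cycles in any permutation is the only structural input, and the uniform measure on $S_n$ enters only through the elementary count of extensions.
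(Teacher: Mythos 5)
Your proof is correct, but it takes a genuinely different route from the paper. You argue directly with indicator variables: expand each falling factorial as a sum over ordered tuples of distinct $k_i$-cycles, note that distinct cycles of a permutation sharing an element cannot coexist so only support-disjoint tuples survive, count those tuples as $\frac{n!}{(n-m)!}\prod_i k_i^{-r_i}$ with $m=\sum_i k_i r_i$, and multiply by the extension probability $(n-m)!/n!$. The paper instead writes down the multivariate generating function $P(z,u_1,\ldots,u_s) = \frac{1}{1-z}\exp\bigl(\sum_i (u_i-1)z^{k_i}/k_i\bigr)$, differentiates $r_i$ times in each $u_i$ (each differentiation bringing down a factor $z^{k_i}/k_i$), and extracts $[z^n]$; the threshold $n \ge \sum_i k_i r_i$ falls out because $[z^n]\,z^m/(1-z)$ vanishes for $m>n$. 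Your argument is more elementary and makes the combinatorial cancellation between the multinomial factor and the $(k_i-1)!$ cyclic arrangements explicit, at the cost of some bookkeeping; the paper's is shorter given the symbolic-method setup and produces the cutoff automatically from coefficient extraction. Both are complete proofs, and your handling of the degenerate case $m>n$ (no disjoint tuples exist) is the right way to get the ``zero otherwise'' clause in this framework.
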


\begin{proof} We construct the generating function, exponential in $z$ and ordinary in $u_1, \ldots, u_s$, which counts permutations by their size and number of $k_1, \ldots, k_s$-cycles.  This is
\[ P(z, u_1, \ldots, u_s) = {1 \over 1-z} \exp \left( \sum_{i=1}^s (u_i-1) {z^{k_i} \over k_i} \right). \]
The desired joint factorial moment is then
\[ { [z^n] \partial_{u_1}^{r_1} \cdots \left. \partial_{u_s}^{r_s} P(z, u_1, \ldots, u_s) \right|_{u_1 = \ldots = u_s = 1} \over [z^n] P(z, 1 \ldots, 1) } \]
and we note that each differentiation with respect to $u_i$ brings down a factor of $z^{k_i}/k_i$.  Thus we have
\[
\E \left( \prod_{i=1}^s (X_{k_i}^{(n)})_{r_i} \right) = {[z^n] { \prod_{i=1}^s \left( {z^{k_i}\over{k_i}} \right)^{r_i} \over 1-z} \over [z^n] {1 \over 1-z}} = [z^n] {z^{\sum_{i=1}^s k_i r_i} \over 1-z} \left( \prod_{i=1}^s k_i^{-r_i} \right). 
\]
where in the last equality we have used the fact that $[z^n](1-z)^{-1} = 1$ for all  $n \ge 0$.  The coefficient is $\prod_{i=1}^s k_i^{-r_i}$ if $n \ge \sum_{i=1}^s k_i r_i$ and $0$ otherwise, giving the desired result.
\end{proof}

In particular, we have the following corollary:
\begin{cor}\label{cor:moments} Let $X_k^{(n)}$ be the number of $k$-cycles in a permutation of $[n]$.  Then, choosing permutations uniformly at random, we have
 $\E ((X_k^{(n)})_r) = k^{-r}$ if $kr \le n$, and 0 otherwise.
\end{cor}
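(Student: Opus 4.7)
The plan is to derive this as an immediate specialization of Proposition \ref{prop:joint-moments}. Setting $s = 1$, $k_1 = k$, and $r_1 = r$ in the statement of the proposition, the condition $n \ge \sum_{i=1}^s k_i r_i$ collapses to $n \ge kr$, and the product $\prod_{i=1}^s k_i^{-r_i}$ collapses to $k^{-r}$, yielding exactly the claim. So no additional argument is needed beyond citing the previous proposition.

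If one preferred a direct argument, the approach would be to rerun the generating-function computation from the proof of Proposition \ref{prop:joint-moments} in the single-variable case: the bivariate EGF for permutations marked by $k$-cycles is $P(z, u) = (1-z)^{-1} \exp((u-1) z^k/k)$, and differentiating $r$ times in $u$ and setting $u = 1$ brings down a factor $(z^k/k)^r$, so
\[
\E\bigl((X_k^{(n)})_r\bigr) = [z^n] \frac{z^{kr}/k^r}{1-z} = \frac{1}{k^r} [z^n] \frac{z^{kr}}{1-z},
\]
which equals $k^{-r}$ when $n \ge kr$ and $0$ otherwise. There is no real obstacle here — the content of the corollary is identical to the $s=1$ case of the proposition, and is recorded separately only because the marginal factorial moments of a single $X_k^{(n)}$ will be the quantities most frequently invoked in the sequel.
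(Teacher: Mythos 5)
Your proposal is correct and matches the paper exactly: the paper's proof of this corollary is simply the observation that it is the $s=1$ case of Proposition \ref{prop:joint-moments}. Your optional direct argument is just the single-variable instance of the same generating-function computation, so nothing further is needed.
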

\begin{proof} This is the $s=1$ case of the previous proposition. \end{proof}

Note that Proposition \ref{prop:joint-moments} can be expressed in the following way, in light of Corollary \ref{cor:moments}: the joint factorial moments of numbers of $k$-cycles in random permutations are those of independent Poisson random variables with the same mean unless there is not enough room for the indicated cycles, in which case they are exactly zero.  Formulas for the joint power moments of the $X_k$ can be derived by expressing them as linear combinations of joint factorial moments.

Our major tool is the following theorem, which expresses the $r$th factorial moment of the number of cycles of a random permutation of $[n]$ with length in $[\gamma n, \delta n]$ as a certain $r$-fold integral.

\begin{thm}\label{thm:main} Fix $0 \le \gamma < \delta \le 1$.  Let $X^{(n)}$ be the number of cycles in a random permutation of $[n]$ having length in the interval $[\gamma n, \delta n]$.  Then
\[ \lim_{n \to \infty} \E(X^{(n)})_r = \int_{z_1 + \ldots + z_r \le 1 \atop z_i \in [\gamma, \delta]} {1 \over z_1 \cdots z_r} \: dz_1 \cdots dz_r. \]
 \end{thm}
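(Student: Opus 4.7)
My plan is to decompose $X^{(n)}$ as a sum of individual cycle counts, expand the $r$th falling factorial combinatorially, apply Proposition~\ref{prop:joint-moments} to each resulting joint factorial moment, and recognize the finite sum that emerges as a Riemann sum for the stated integral.

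Write $X^{(n)} = \sum_{k \in I_n} X_k^{(n)}$ where $I_n := [\gamma n, \delta n] \cap \mathbb{Z}$. Since $X^{(n)}$ counts a set (the cycles of $\pi$ with length in $I_n$), its falling factorial $(X^{(n)})_r$ counts ordered $r$-tuples $(C_1, \ldots, C_r)$ of pairwise distinct such cycles. Partitioning this count by the length tuple $(k_1, \ldots, k_r) := (|C_1|, \ldots, |C_r|) \in I_n^r$ and observing that cycles of different lengths are automatically distinct while ordered $r_i$-tuples of distinct $j_i$-cycles are counted by $(X_{j_i}^{(n)})_{r_i}$, I obtain
\[
(X^{(n)})_r = \sum_{(k_1, \ldots, k_r) \in I_n^r} \prod_{i=1}^{s} \left( X_{j_i}^{(n)} \right)_{r_i},
\]
where $j_1, \ldots, j_s$ are the distinct values appearing in $(k_1, \ldots, k_r)$ with multiplicities $r_1, \ldots, r_s$ (so $\sum_i r_i = r$ and $\sum_i j_i r_i = \sum_l k_l$). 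Taking expectations and invoking Proposition~\ref{prop:joint-moments}, each joint factorial moment collapses to $\prod_i j_i^{-r_i} = \prod_l k_l^{-1}$ when $\sum_l k_l \le n$, and to $0$ otherwise.

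Pulling out a factor of $n^{-r}$ and setting $z_l = k_l/n$ exhibits
\[
\E(X^{(n)})_r = \frac{1}{n^r} \sum_{\substack{(k_1, \ldots, k_r) \in I_n^r \\ k_1 + \cdots + k_r \le n}} \prod_{l=1}^{r} \frac{1}{k_l / n}
\]
as a Riemann sum of mesh $1/n$ for the integral $\int_R \prod_l z_l^{-1} \, dz_1 \cdots dz_r$ over $R = \{z \in [\gamma, \delta]^r : \sum_l z_l \le 1\}$. For $\gamma > 0$ the integrand is continuous and bounded by $\gamma^{-r}$ on the compact region $R$, so standard convergence of Riemann sums yields the claim. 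The main technical nuisance I anticipate is the $O(1/n)$ discrepancy between the discrete constraints ($\lceil \gamma n \rceil \le k_l \le \lfloor \delta n \rfloor$ and $\sum_l k_l \le n$) and the continuous region $R$; however, since the integrand is bounded and the symmetric difference of the two regions has Lebesgue measure $O(1/n)$, this discrepancy contributes $O(1/n)$ and disappears in the limit.
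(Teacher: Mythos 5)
Your proposal is correct and follows essentially the same route as the paper: decompose $X^{(n)}$ into the $X_k^{(n)}$, expand the falling factorial (your count of ordered $r$-tuples of distinct cycles is exactly the multinomial theorem for falling powers used in the paper, with the multinomial coefficient absorbed into the sum over ordered length-tuples), apply Proposition~\ref{prop:joint-moments} termwise, and identify the result as a Riemann sum for the sliced-cube integral. Your explicit handling of the $O(1/n)$ boundary discrepancy is a small refinement the paper leaves implicit.
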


\begin{proof} Let $X_k^{(n)}$ be the number of $k$-cycles of a random permutation of $[n]$.  Then $X^{(n)} = \sum_{k=\gamma n}^{\delta n} X_k^{(n)}$
and we can take the expectations of $r$th factorial moments to get
\[ \E \left( X^{(n)} \right)_r = \E \left( \left( \sum_{k=\gamma n}^{\delta n} X_k^{(n)} \right)_r \right). \]
This sum can be expanded using the multinomial theorem for falling powers.  We get
\[ \E \left( X^{(n)} \right)_r = \E \left( \sum_{l_{\gamma n} + \cdots + l_{\delta n} = r} (X_{\gamma n})_{l_{\gamma n}} \cdots (X_{\delta n})_{l_{\delta n}} {r \choose l_{\gamma n}, \cdots, l_{\delta n}} \right) \]
and we can bring the expectation inside the sum.  The termwise expectations are known from Proposition \ref{prop:joint-moments}, and so we have
\begin{equation}\label{eq:multinomial} \E(X^{(n)})_r = \sum_{l_{\gamma n} + \cdots + l_{\delta n} = r \atop \sum_{k = \gamma n}^{\delta n} k l_k \le n} \left[ {r \choose l_{\gamma n}, \cdots, l_{\delta n}} \prod_{k = \gamma n}^{\delta n} \left( {1 \over k} \right)^{l_k}   \right] \end{equation}
Now, we consider the multinomial expansion 
\begin{equation}\label{eq:rth-power} \left( \sum_{k=\gamma n}^{\delta n} {1 \over k} \right)^r = \sum_{l_{\gamma n} + \cdots + l_{\delta n} = r} \left[ {r \choose l_{\gamma n}, \cdots, l_{\delta n}} \prod_{k = \gamma n}^{\delta n} \left( {1 \over k} \right)^{l_k}   \right] \end{equation}
The expansion has a term $1/(k_1 \ldots k_r)$ for each $r$-tuple $(k_1, \ldots, k_r)$ in $[\gamma n, \delta n]^r$.   This can be interpreted as a Riemann sum for the $r$-fold integral 
\[ \int_{\gamma n}^{\delta n} \cdots \int_{\gamma n}^{\delta n} {1 \over w_1 \cdots w_r} \: dw_1 \cdots dw_r \]
The restriction $\sum_k kl_k \le n$ cuts off that part of the region of summation where $w_1 + \cdots + w_r > n$. Thus the actual sum (\ref{eq:multinomial}) is a Riemann sum for 
\[ \int \ldots \int {1 \over w_1 \cdots w_r} \: dw_1 \cdots dw_r \]
where the $r$-fold integral is over $w_1 + \ldots + w_n \in [\gamma n, \delta n], w_1 + \ldots + w_r \le n$.  The change of variables $z_i = w_i/n$ gives the desired result. 
\end{proof}

\begin{prop}\label{prop:longest-cycles} Fix $\alpha > 1/2$.  As $n \to \infty$, the probability that a randomly chosen permutation of $[n]$ has a cycle of length at least $\alpha n$ approaches $-\log \alpha$. \end{prop}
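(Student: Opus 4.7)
The plan is to observe that for $\alpha > 1/2$, the event ``the permutation has a cycle of length at least $\alpha n$'' coincides with the event ``the number of such cycles equals $1$''. Indeed, two disjoint cycles of length at least $\alpha n > n/2$ cannot coexist in a permutation of $[n]$, since their combined length would exceed $n$. Hence if $X^{(n)}$ denotes the number of cycles of length in $[\alpha n, n]$, then $X^{(n)} \in \{0,1\}$, and therefore
\[ \P(X^{(n)} \ge 1) = \E(X^{(n)}). \]

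Next, I would invoke Theorem~\ref{thm:main} with parameters $\gamma = \alpha$, $\delta = 1$, and $r = 1$. Since the $r=1$ falling factorial is just the random variable itself, this gives
\[ \lim_{n\to\infty} \E(X^{(n)}) = \int_{\alpha}^{1} \frac{1}{z}\,dz = -\log \alpha, \]
where the restriction $z_1 \le 1$ from Theorem~\ref{thm:main} is automatic for a single variable in $[\alpha, 1]$. Combining with the previous display yields the claim.

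There is essentially no obstacle: the whole argument rests on the elementary ``pigeonhole'' observation that forces $X^{(n)}$ to be a $0/1$-valued indicator, which turns the probability we want to compute into a first moment, and then Theorem~\ref{thm:main} supplies the limit of that first moment directly. The only thing to be a little careful about is confirming that the endpoint conventions in Theorem~\ref{thm:main} (the interval $[\gamma n, \delta n]$ and the constraint $z_1 \le 1$) are compatible with taking $\delta = 1$; since a cycle length cannot exceed $n$, the upper limit $\delta n = n$ causes no issue, and the region of integration is simply $z_1 \in [\alpha, 1]$.
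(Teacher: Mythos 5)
Your argument is exactly the one the paper gives: the pigeonhole observation that a permutation of $[n]$ can have at most one cycle of length exceeding $n/2$ makes $X^{(n)}$ a $0/1$-valued variable, so the probability equals the first moment, which Theorem~\ref{thm:main} with $r=1$ identifies as $\int_\alpha^1 z^{-1}\,dz = -\log\alpha$. The proposal is correct and matches the paper's proof.
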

\begin{proof} We apply Theorem \ref{thm:main} to get
\[ \lim_{n \to \infty} \E(X^{(n)}) = \int_\alpha^1 {1 \over z} \: dz = - \log \alpha. \]
A permutation of $[n]$ can have at most one cycle of length longer than $n/2$, so the probability of having such a cycle is equal to the expected number of them.
\end{proof}

We can compare this to a number-theoretic result: the expected number of prime factors of a random integer in $[1,N]$ which are at least $N^\alpha$, for $\alpha > 1/2$, also approaches $-\log \alpha$ as $N \to \infty$. This is but one example of an analogy between prime factorizations of random integers and cycle structure of permutations, developed by Granville in \cite{G-anatomy}. 
This is the simplest example of our general method.  We know that the distribution of $X$ is concentrated on two values; thus knowing $\E (X^{(n)})_0$ and $\E (X^{(n)})_1$ suffices to give the distribution of $X$.  In general, if we know that $X$ is concentrated on $k$ values, finding $\E (X^{(n)})_0, \E (X^{(n)})_1, \ldots, \E (X^{(n)})_{k-1}$ gives a system of $k$ linear equations in $k$ unknowns which can be solved to determine the distribution of $X$.  In order to make stating results easier, we make the following definition.

\begin{defin} We say a random variable $X$ has quasi-Poisson($r,\lambda$) distribution if $\E((X)_k) = \lambda^k$ for $k = 0, 1, \ldots, r$ and $X$ is supported on $\{ 0, 1, \ldots, r \}$. \end{defin}

The $k$th factorial moment of a Poisson($\lambda$) random variable is $\lambda^k$.  So in a sense, the quasi-Poisson random variables are trying to be Poisson, subject to an upper limit on their value.  Let $\pi_i(r, \lambda)$ be the probability that a quasi-Poisson($r,\lambda$) random variable has value $i$.  Our knowledge of the moments allows us to set up a system of equations to find $\pi_i(r, \lambda)$.  The solution is given in the following theorem.
\begin{thm}\label{thm:quasi-poisson-values} The probability that a quasi-Poisson$(r,\lambda)$ random variable has value $i$ is
\begin{equation}\label{eq:quasi-poisson-values} \pi_i(r, \lambda) = \sum_{j=i}^r {j \choose i} {1 \over j!} (-1)^{j-i} \lambda^j.\end{equation}
 \end{thm}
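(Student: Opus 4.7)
\medskip

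The plan is to treat the defining conditions of a quasi-Poisson$(r,\lambda)$ random variable as a linear system in the unknown probabilities $\pi_0,\ldots,\pi_r$ and then invert it. Expanding the factorial moment gives, for each $k\in\{0,1,\ldots,r\}$,
\[
\lambda^k \;=\; \E((X)_k) \;=\; \sum_{i=0}^{r} (i)_k\, \pi_i(r,\lambda) \;=\; k!\sum_{i=k}^{r}\binom{i}{k}\pi_i(r,\lambda).
\]
This is a triangular system of $r+1$ equations in the $r+1$ unknowns $\pi_i(r,\lambda)$ (the $k=0$ equation just recovers $\sum_i \pi_i = 1$). Triangularity guarantees a unique solution, so it suffices to verify that the formula \eqref{eq:quasi-poisson-values} satisfies the system.

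To verify, I would substitute the claimed formula into $k!\sum_{i=k}^{r}\binom{i}{k}\pi_i(r,\lambda)$, swap the order of summation, and apply the elementary identity $\binom{i}{k}\binom{j}{i} = \binom{j}{k}\binom{j-k}{i-k}$ (or equivalently $(i)_k\binom{j}{i} = \frac{j!}{(i-k)!(j-i)!}$). After the substitution $m=i-k$ the inner sum collapses to
\[
\sum_{m=0}^{j-k}\binom{j-k}{m}(-1)^{j-k-m} \;=\; (1-1)^{j-k} \;=\; [j=k],
\]
so the double sum telescopes to $\lambda^k$, exactly as required. Thus the stated $\pi_i(r,\lambda)$ is the unique solution.

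The only step that requires any care is bookkeeping the ranges of summation and the sign conventions when the two binomial coefficients are combined; everything else is mechanical. An alternative, equally short route would be to cite the standard binomial-inversion identity: if $a_k = \sum_{i\ge k}\binom{i}{k} b_i$, then $b_i = \sum_{j\ge i}(-1)^{j-i}\binom{j}{i} a_j$. Taking $a_k = \lambda^k/k!$ and $b_i = \pi_i(r,\lambda)$ immediately gives \eqref{eq:quasi-poisson-values}. I expect the author to use one of these two essentially equivalent arguments; the combinatorial identity handling is the only non-trivial ingredient, and it is standard.
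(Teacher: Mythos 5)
Your proposal is correct and takes essentially the same approach as the paper: the paper sets up the identical triangular system $\lambda^k=\sum_i (i)_k\,\pi_i$ and inverts it by the standard binomial-inversion lemma ($M_{ij}=\binom{j}{i}$, $N_{ij}=(-1)^{i+j}\binom{j}{i}$, $MN=I$, cited from Stanley), which is exactly your second route, while your first route simply proves that lemma inline. No gaps.
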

 We begin by recalling the following lemma.   

\begin{lemma}\label{lem:binomial-matrix} Let $M = M_n, N = N_n$ be $(n+1)$ by $(n+1)$ matrices such that $M_{ij} = {j \choose i}, N_{ij} = {j \choose i} (-1)^{j+i}$, where the rows and columns of $M$ and $N$ are indexed by $0, 1, \ldots, n$.  Then $MN = I$, the identity matrix. \end{lemma}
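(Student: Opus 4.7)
The plan is to compute the $(i,k)$ entry of the product $MN$ directly and show it equals the Kronecker delta $\delta_{ik}$. By matrix multiplication,
\[
(MN)_{ik} = \sum_{j=0}^n M_{ij} N_{jk} = \sum_{j=0}^n \binom{j}{i} \binom{k}{j} (-1)^{j+k}.
\]
Since $\binom{j}{i} = 0$ when $j < i$ and $\binom{k}{j} = 0$ when $j > k$, the effective range of summation is $i \le j \le k$.

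First I would dispose of the off-diagonal cases. If $k < i$, the range $i \le j \le k$ is empty, so $(MN)_{ik} = 0$ trivially. If $k > i$, I would apply the standard ``trinomial revision'' identity $\binom{k}{j}\binom{j}{i} = \binom{k}{i}\binom{k-i}{j-i}$, substitute $m = j - i$, and pull $\binom{k}{i}(-1)^{k+i}$ outside to obtain
\[
(MN)_{ik} = \binom{k}{i}(-1)^{k+i} \sum_{m=0}^{k-i} \binom{k-i}{m} (-1)^m = \binom{k}{i}(-1)^{k+i}(1-1)^{k-i} = 0,
\]
using the binomial theorem and the fact that $k - i \ge 1$.

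For the diagonal case $k = i$, the constraints $i \le j \le k = i$ force $j = i$, and the single surviving term is $\binom{i}{i}\binom{i}{i}(-1)^{2i} = 1$. Thus $(MN)_{ik} = \delta_{ik}$, proving $MN = I$.

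There is no real obstacle here; the only thing to watch is correctly handling the three cases $k < i$, $k = i$, $k > i$ so that the binomial coefficients are nonzero where needed and the identity $\binom{k}{j}\binom{j}{i} = \binom{k}{i}\binom{k-i}{j-i}$ is applied in a valid range. The result is a standard instance of binomial inversion, and one could alternatively quote it from any combinatorics reference, but the direct computation above is short enough to include in full.
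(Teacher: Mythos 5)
Your computation is correct: the entrywise expansion, the reduction of the summation range to $i \le j \le k$, the trinomial revision $\binom{k}{j}\binom{j}{i} = \binom{k}{i}\binom{k-i}{j-i}$, and the evaluation of the alternating sum as $(1-1)^{k-i}$ are all valid, and the three cases are handled properly. The paper itself does not prove this lemma but simply cites Stanley's \emph{Enumerative Combinatorics}; your direct calculation is the standard argument one would find there, so it serves as a correct, self-contained substitute for the citation.
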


For a proof, see \cite[p. 66-67]{S}.

\begin{proof}[Proof of Theorem \ref{thm:quasi-poisson-values}] The factorial moments specified in the definition of quasi-Poisson random variables give
\begin{equation}\label{eq:quasi-poisson-system}(1, \lambda, \ldots, \lambda^r)^T = A_r (\pi_0(r,\lambda), \pi_1(r,\lambda), \ldots, \pi_r(r,\lambda))^T \end{equation}
where $A_r$ is an $(r+1)$ by $(r+1)$ matrix, with rows and columns indexed by $0, 1, \ldots, r$, and $(A_r)_{ij} = (j)_i$.  The $k$th entry when the right-hand side of (\ref{eq:quasi-poisson-system}) is $\sum_{k=0}^r (k)_i \pi_k(r, \lambda)$, which is the expectation of $(X)_i$ when $X$ is quasi-Poisson.  This matrix is obtained from the $M_r$ of Lemma \ref{lem:binomial-matrix} by multiplying all the entries in column $i$ by $i!$.  By Lemma \ref{lem:binomial-matrix} its inverse is obtained from $N_r$ by dividing all the entries in row $j$ by $j!$.  Thus, we have
\[
B_r(1, \lambda, \ldots, \lambda^r)^T = (\pi_0(r,\lambda), \pi_1(r,\lambda), \ldots, \pi_r(r,\lambda))^T
\]
where $B_r = N_r^{-1}$.  Thus $(B_r)_{ij} = {j \choose i} {1 \over j!} (-1)^{j+i}$ and this is the desired result in matrix form. \end{proof}

The sum (\ref{eq:quasi-poisson-values}) giving $\pi_i(r,\lambda)$ consists of the first $r-i$ nonzero terms of the Maclaurin series for $(z^i/i!)e^{-z}$, evaluated at $z=\lambda$.  Thus if $r$ is large, then $\pi_i(r,\lambda)$ approximates the corresponding probability for Poisson random variables.

While the theorems given here only invoke quasi-Poisson distributions of mean at most $\log 2$, in fact we have
\begin{thm} Quasi-Poisson($r,\lambda$) random variables exist for every positive integer $r$ and real number $\lambda \in [0,1]$, and no other choices of $\lambda$. \end{thm}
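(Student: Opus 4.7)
The plan is to use Theorem \ref{thm:quasi-poisson-values} to reduce the question to an elementary positivity statement. The factorial moment conditions pin down the values $\pi_i(r,\lambda)$ uniquely via (\ref{eq:quasi-poisson-values}), and the normalization $\sum_i \pi_i(r,\lambda) = 1$ is automatic from the $k=0$ moment condition $\E((X)_0) = 1$. Thus a quasi-Poisson$(r,\lambda)$ random variable exists if and only if every $\pi_i(r,\lambda) \ge 0$.

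For the positive direction, I would first reindex the sum in (\ref{eq:quasi-poisson-values}) by $m = j-i$ to obtain
\[ \pi_i(r,\lambda) = \frac{\lambda^i}{i!} \sum_{m=0}^{r-i} \frac{(-\lambda)^m}{m!}, \]
exhibiting the probability as $\lambda^i/i!$ times the truncated Maclaurin polynomial $T_{r-i}(-\lambda)$ of $e^{-\lambda}$. For $\lambda \in [0,1]$ the prefactor is nonnegative, so it suffices to prove $T_N(-\lambda) \ge 0$ for every $N \ge 0$. When $N = 2k+1$ is odd, I pair consecutive terms:
\[ T_N(-\lambda) = \sum_{j=0}^{k} \frac{\lambda^{2j}}{(2j)!}\left(1 - \frac{\lambda}{2j+1}\right), \]
and each factor is nonnegative because $\lambda \le 1 \le 2j+1$. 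When $N$ is even, $T_N(-\lambda) = T_{N-1}(-\lambda) + \lambda^N/N! \ge 0$ by the odd case already handled.

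For the negative direction, I would dispatch the two excluded ranges separately. If $\lambda < 0$, then $\E(X) = \lambda < 0$ contradicts $X \ge 0$. If $\lambda > 1$, direct inspection of (\ref{eq:quasi-poisson-values}) (only the terms $j = r-1$ and $j = r$ survive) gives $\pi_{r-1}(r,\lambda) = \lambda^{r-1}(1-\lambda)/(r-1)!$, which is strictly negative and so rules out existence. The only genuine content lies in the partial-sum positivity for $\lambda \in [0,1]$; the pairing argument succeeds precisely because $\lambda \le 1$, so the hypothesis is used in a sharp way and the upper endpoint of $[0,1]$ is optimal.
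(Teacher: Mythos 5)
Your proof is correct and follows essentially the same route as the paper: it rules out $\lambda>1$ via the explicit negativity of $\pi_{r-1}(r,\lambda)=\lambda^{r-1}(1-\lambda)/(r-1)!$, and establishes nonnegativity for $\lambda\in[0,1]$ by pairing consecutive terms of the alternating sum, which succeeds exactly because $\lambda\le 1$ (your reindexed term ratio $(m+1)/\lambda$ is the paper's $(j+1-i)/\lambda$, and the paper itself notes the truncated-exponential reformulation just after Theorem \ref{thm:quasi-poisson-values}). The only addition is your explicit dispatch of $\lambda<0$ via $\E(X)=\lambda\ge 0$, a case the paper leaves implicit.
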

\begin{proof} The system of equations (\ref{eq:quasi-poisson-system}) which gives the quasi-Poisson distribution is solved in Theorem \ref{thm:quasi-poisson-values}; we need to show that
\begin{equation}\label{eq:system-restated}
\pi_i(r,\lambda) = \sum_{j=i}^r {j \choose i} {1 \over j!} (-1)^{j+i} \lambda^j
\end{equation}
is nonnegative for all $i$ exactly when $\lambda \in [0,1]$.  We have
\begin{eqnarray*}
\pi_{r-1}(r,\lambda) &=& {r-1 \choose r-1} {1 \over (r-1)!} (-1)^{2r-2} \lambda^{r-1} + {r \choose r-1} {1 \over r!} (-1)^{2r-1} \lambda^r \\
&=& {1 \over (r-1)!} \lambda^{r-1} (1-\lambda)
\end{eqnarray*}
and if $\lambda>1$ this is negative.  So the solution previously given for this system does  not give the distribution of a random variable; as noted in Lemma \ref{lem:binomial-matrix} the system is nonsingular, so this is the only solution to the system.  If $0 < \lambda < 1$, we note that the terms of (\ref{eq:system-restated}) are alternately negative and positive, with the first term positive.  Thus to show $\pi_i(r, \lambda) > 0$, it suffices to show that the terms are decreasing in absolute value as $j$ increases.  That is, we need
\[
{j \choose i} {1 \over j!} \lambda^j > {j+1 \choose i} {1 \over (j+1)!} \lambda^{j+1}
\]
and the left-hand side of this equation, divided by its right-hand side, is $(j+1-i)/\lambda$; since $j \ge i$ and $\lambda < 1$ this is greater than $1$, giving the desired inequality. \end{proof}

The quasi-Poisson($r,1$) distribution is well-known under another name in the study of permutations.  It is the distribution of the number of fixed points of a permutation of $[r]$.

We will generally prove convergence of a sequence of random variables to a quasi-Poisson by proving that the factorial moments of that sequence are converging to the factorial moments of a quasi-Poisson.   It is known that the moments of a distribution with finite support uniquely determine the distribution \cite[p. 778]{FS}.  It is also the case that if $F_n(x)$ for $n = 0, 1, 2, \ldots$ are the distribution functions of random variables and 
\[ \lim_{n \to \infty} \int_{-\infty}^\infty (x)_k dF_n(x) = \int_{-\infty}^\infty (x)_k dF(x) \]
and $F$ is characterized by its moments, then the $F_n$ converge in distribution to $F$ \cite[Thm. 30.2]{Bi}.  Therefore to show that a sequence of random variables converge to a quasi-Poisson, it suffices to show that the moments converge to those of the quasi-Poisson.  The proof of the following theorem is an example. 

\begin{thm}\label{thm:box-integral} Fix $\gamma, \delta$ such that ${1 \over k+1} \le \gamma < \delta \le {1 \over k}$ for some integer $k$.  (Alternatively, $\lfloor \delta^{-1} \rfloor + 1 = \lceil \gamma^{-1} \rceil$.)  Let $X^{(n)}$ be a random variable on $S_n$ with uniform measure, with $X^{(n)}(\pi)$ equal to the number of cycles of the permutation $\pi$ with length in $[\gamma n, \delta n]$.  Then as $n \to \infty$, $X_n$ converges in distribution to the quasi-Poisson$(k,\log \delta/\gamma)$ distribution.
\end{thm}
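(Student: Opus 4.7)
The plan is to verify that the factorial moments of $X^{(n)}$, computed via Theorem \ref{thm:main}, converge to the factorial moments of the quasi-Poisson$(k, \log \delta/\gamma)$ distribution for every $r \ge 0$, and then invoke the moment-determinacy criterion quoted just above the theorem statement to conclude convergence in distribution.

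The key observation is geometric. When $r \le k$ and each $z_i \in [\gamma, \delta]$, we have
\[
z_1 + \cdots + z_r \le r \delta \le r/k \le 1,
\]
so the half-space constraint in the integral of Theorem \ref{thm:main} is automatic on the entire hypercube $[\gamma, \delta]^r$. The integrand therefore factors and
\[
\lim_{n \to \infty} \E((X^{(n)})_r) = \int_{[\gamma, \delta]^r} \frac{dz_1 \cdots dz_r}{z_1 \cdots z_r} = \left( \int_\gamma^\delta \frac{dz}{z} \right)^r = \left( \log \frac{\delta}{\gamma} \right)^r,
\]
which is precisely the $r$th factorial moment prescribed by the definition of quasi-Poisson$(k, \log \delta/\gamma)$.

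When $r > k$, the opposite inequality
\[
z_1 + \cdots + z_r \ge r \gamma \ge (k+1) \gamma \ge 1
\]
holds throughout $[\gamma, \delta]^r$, with equality possible only in the degenerate case $r = k+1$, $\gamma = 1/(k+1)$, and $z_1 = \cdots = z_{k+1} = \gamma$, which is a single point and hence a measure-zero subset. Therefore the integration region in Theorem \ref{thm:main} has measure zero and $\lim_{n \to \infty} \E((X^{(n)})_r) = 0$, which also matches the $r$th factorial moment of any random variable supported on $\{0, 1, \ldots, k\}$, since $(j)_r = 0$ whenever $0 \le j \le k < r$.

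Having matched every factorial moment of $X^{(n)}$ to that of the claimed limit, and since factorial moments and ordinary power moments are related by an invertible triangular linear change of basis, the ordinary moments converge as well. The quasi-Poisson$(k, \log \delta/\gamma)$ distribution has finite support, so it is determined by its moments, and the criterion from \cite{Bi} cited in the excerpt delivers convergence in distribution. No step looks especially difficult; the only mild subtlety is the boundary case $\gamma = 1/(k+1)$, where a permutation of $[n]$ could in principle carry $k+1$ cycles of length exactly $\gamma n$, but the moment computation above shows these rare configurations contribute nothing in the limit, consistent with the limit being supported on $\{0, 1, \ldots, k\}$.
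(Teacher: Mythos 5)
Your proof is correct and follows essentially the same route as the paper: apply Theorem \ref{thm:main}, observe that for $r \le k$ the half-space constraint $z_1+\cdots+z_r\le 1$ is vacuous on $[\gamma,\delta]^r$ so the integral factors into $(\log \delta/\gamma)^r$, and conclude by convergence of moments to a moment-determinate limit. You are in fact slightly more thorough than the paper, which stops after checking the $0$th through $k$th factorial moments; your additional verification that the $r$th factorial moments vanish for $r>k$ (the integration region being at most a single point) is what fully licenses the appeal to the moment-convergence criterion, including in the boundary case $\gamma=1/(k+1)$.
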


\begin{proof} It suffices to show that the 0th through $k$th factorial moments of $X^{(n)}$ approach those of the quasi-Poisson, i. e.
 \[ \lim_{n \to \infty} \E((X^{(n)})_r) = (\log \delta/\gamma)^r. \]
We apply Theorem \ref{thm:main}; the desired limit is
\[ \int_{z_1 + \ldots + z_r \le 1 \atop z_i \in [\gamma, \delta]} {1 \over z_1 \cdots z_r} \: dz_1 \cdots dz_r \]
and this integral is actually over an $r$-dimensional box $[\gamma, \delta]^r$, since the condition $z_1 + \cdots + z_r \le 1$ is always satisfied.  The integral factors into
\[ \left( \int_\gamma^\delta {1 \over z} \: dz \right)^r = \log \left( {\delta \over \gamma} \right)^r \]
and these are the factorial moments of the Poisson (or quasi-Poisson), proving the theorem.
\end{proof}

For example, in a random permutation of $[n]$, for $n$ large, how many cycles have length between $n/4$ and $n/3$?  We know that the limiting distribution is quasi-Poisson($3,\lambda$) with $\lambda = \log 4/3$; the values can be found explicitly, and are
\begin{eqnarray*}
\pi_0 &=& 1 - \lambda + \lambda^2/2 - \lambda^3/6 = 0.7497\ldots \\
\pi_1 &=& \lambda - \lambda^2 + \lambda^3/2 = 0.2168\ldots \\
\pi_2 &=& \lambda^2/2 - \lambda^3/2 = 0.0295\ldots \\
\pi_3 &=& \lambda^3/6 = 0.0040\ldots
\end{eqnarray*}

One shortcoming of Theorem \ref{thm:box-integral} (and, implicitly, Theorem \ref{prop:longest-cycles}), which the reader may have noted, is that we require $\gamma$ and $\delta$ to be in the same interval of the form $[{1 \over k+1}, {1 \over k}]$ for some integer $k$.  This is not accidental; the expressions for the limiting probabilities become much more complicated if this is not the case.  However, such expressions still exist.

\begin{prop}  Fix an integer $i$.  The probability that a permutation has $i$ cycles of length in $[ \gamma n, \delta n ]$ for any $0 < \gamma < \delta \le 1$, approaches a limit as $n \to \infty$. \end{prop}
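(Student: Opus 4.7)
The plan is to combine a boundedness observation with Theorem \ref{thm:main} and the invertibility of the factorial-moment matrix from Theorem \ref{thm:quasi-poisson-values}. First I would observe that $X^{(n)}$ has uniformly bounded support: set $K = \lfloor 1/\gamma \rfloor$, and note that once $n$ is large enough that $\gamma n \ge 1$, any permutation of $[n]$ contains at most $K$ pairwise disjoint cycles of length $\ge \gamma n$. Hence $X^{(n)} \in \{0, 1, \ldots, K\}$ eventually, so $\P(X^{(n)} = i) = 0$ for $i > K$ (which trivially has a limit), and it suffices to treat $i \in \{0, 1, \ldots, K\}$.

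Next, I would apply Theorem \ref{thm:main} to conclude that, for each $r \in \{0, 1, \ldots, K\}$,
\[
\E\bigl((X^{(n)})_r\bigr) \longrightarrow I_r := \int_{z_1 + \cdots + z_r \le 1 \atop z_i \in [\gamma, \delta]} \frac{1}{z_1 \cdots z_r} \, dz_1 \cdots dz_r.
\]
The probabilities $p_i^{(n)} := \P(X^{(n)} = i)$ satisfy the linear system
\[
\sum_{i=0}^K (i)_r\, p_i^{(n)} = \E\bigl((X^{(n)})_r\bigr), \qquad r = 0, 1, \ldots, K,
\]
whose coefficient matrix is precisely the matrix $A_K$ appearing in the proof of Theorem \ref{thm:quasi-poisson-values}; it is invertible by Lemma \ref{lem:binomial-matrix} (modulo the column rescaling by factorials that turns $M_K$ into $A_K$). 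Inverting this fixed $(K{+}1) \times (K{+}1)$ system expresses each $p_i^{(n)}$ as a fixed linear combination of the factorial moments, so $p_i^{(n)}$ converges to the corresponding linear combination of the integrals $I_r$.

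The key conceptual point is that compactness of the support trivializes moment determinacy—no Carleman-type hypothesis or tail estimate is needed, just linear algebra on a finite system—so there is no real obstacle in the proof itself. The substantive content is postponed to the resulting limit distribution: outside the regime of Theorem \ref{thm:box-integral}, the constraint $z_1 + \cdots + z_r \le 1$ genuinely cuts into the box $[\gamma, \delta]^r$ once $r > 1/\delta$, so $I_r$ no longer factors as $(\log(\delta/\gamma))^r$ and the limit ceases to be quasi-Poisson. This is the analytic intractability the authors flag after Theorem \ref{thm:box-integral}, but it does not obstruct existence of the limit, which is all the proposition asserts.
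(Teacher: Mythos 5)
Your proof is correct and follows essentially the same route as the paper's: convergence of the factorial moments via Theorem \ref{thm:main}, followed by inversion of the finite $(K+1)\times(K+1)$ factorial-moment system (the paper uses $K = \lfloor \gamma^{-1} \rfloor$ as well). You are in fact slightly more careful than the paper's terse version, which speaks of solving for ``the limiting probabilities'' directly; by expressing each $p_i^{(n)}$ as a fixed linear combination of the finite-$n$ moments and then passing to the limit, you make the existence of the limit immediate without any appeal to moment-determinacy.
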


\begin{proof} We apply Theorem \ref{thm:main} to see that in a random permutation of $[n]$, the number of cycles with length in $[\gamma n, \delta n]$ has an $r$th factorial moment which approaches some finite limit as $n \to \infty$.  In particular the $0$th through $\lfloor \gamma^{-1} \rfloor$th moments of the limiting distribution can be found.  These give a system of $\lfloor \gamma^{-1} \rfloor + 1$ equations in the same number of unknowns, $\P(X=0), \ldots, \P(X=\lfloor \gamma^{-1} \rfloor)$, which can be solved to determine the limiting probabilities.  \end{proof}

The integrals of Theorem \ref{thm:main}, when ${1 \over k+1} \le \gamma < \delta \le {1 \over k}$, are integrals over $r$-cubes and thus factor easily.  In more general circumstances, Theorem \ref{thm:main} gives an integral over some sliced cube, that is, that part of $[\gamma, \delta]^r$ in which the sum of the coordinates is less than some constant.  Under these circumstances such a factorization is not possible. 

Let $1/3 \le \gamma \le 1/2 \le \delta \le 1$.  By Theorem \ref{thm:main}, the probability that a permutation has $i$ cycles of length between $\gamma n$ and $\delta n$, for $i = 0, 1, 2$, approaches a limit as $n \to \infty$.   We let these limits be denoted by $p_i(\gamma, \delta)$ and obtain explicit expressions for $p_0, p_1, p_2$. 

Let $X^{(n)}$ be the number of cycles of a random permutation of $[n]$ with length in $[\gamma n, \delta n]$.  Let $q_i(\gamma, \delta)$ denote $\lim_{n \to \infty} \E(X^{(n)})_i$ for $i = 0, 1, 2$.  We note that
\begin{equation} q_0 = p_0+p_1+p_2, q_1 = p_1 + 2p_2, q_2 = 2p_2 \end{equation}
which can be solved for the $p_i$ to give 
\begin{equation}\label{eq:q-to-p} p_0 = q_0 - q_1 + q_2/2, p_1 = q_1 - q_2, p_2 = q_2/2. \end{equation}   

 Clearly $q_0(\gamma, \delta) = 1$ for all $\gamma, \delta$.  From Theorem \ref{thm:main} we have
\[ q_1 = \int_{\gamma}^{\delta} {1 \over z} \: dz = \log {\delta \over \gamma}. \]

Finally, we have
\[ q_2 = \int \int_{x,y \in [\gamma, \delta] \atop x+y \le 1} {1 \over xy} \: dx \: dy. \]
We must separate into two cases based on the relationship of $\gamma + \delta$ to $1$.  If $\gamma + \delta \ge 1$, then the region of integration is a triangle.  We have the iterated integral
\begin{equation}\label{eq:iterated-integral} \int_r^{1-r} \int_r^{1-x} {1 \over xy} \: dy \: dx = - \log r \log(1-r) - Li_2(r) + Li_2(1-r) + (\log r)^2 \end{equation}
where $Li_2$ is the dilogarithm,
\begin{equation}\label{eq:li2-def} Li_2(z) = \sum_{k=1}^\infty {z^k \over k^2} = \int_z^0 {\log(1-t) \over t} \: dt. \end{equation}
  If $\gamma + \delta \ge 1$, then we can just substitute $\gamma$ for $r$ in this integral to get $q_2$.  (Note that $q_2$ does not depend on $\delta$ in this case.)  If $\gamma + \delta < 1$, then we can break the region of integration into the three rectangles $R_1 = [\gamma, 1-\delta]^2, R_2 = [\gamma, 1-\delta] \times [1-\delta, \delta], R_3 = [1-\delta, \delta] \times [\gamma, 1-\delta]$ and the triangle $T = \{ x,y > 1-\delta, x+y < 1 \}$, as illustrated in Figure \ref{fig:integral-division}.

\begin{figure}
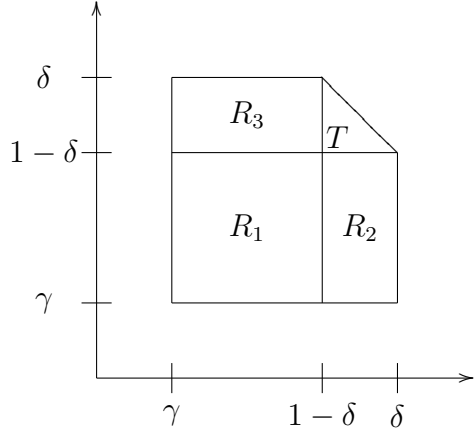

\[
\xy
{\ar (0,0)*{}; (50,0)*{}};
{\ar (0,0)*{}; (0,50)*{}};
(10,10)*{}; (40,10)*{} **\dir{-};
(10,30)*{}; (40,30)*{} **\dir{-};
(10,40)*{}; (30,40)*{} **\dir{-};
(10,10)*{}; (10,40)*{} **\dir{-};
(30,10)*{}; (30,40)*{} **\dir{-};
(40,10)*{}; (40,30)*{} **\dir{-};
(30,40)*{}; (40,30)*{} **\dir{-};
(10,-2)*{}; (10,2)*{} **\dir{-};
(30,-2)*{}; (30,2)*{} **\dir{-};
(40,-2)*{}; (40,2)*{} **\dir{-};
(-2,10)*{}; (2,10)*{} **\dir{-};
(-2,30)*{}; (2,30)*{} **\dir{-};
(-2,40)*{}; (2,40)*{} **\dir{-};
(20,20)*{R_1};
(35,20)*{R_2};
(20,35)*{R_3};
(32,32)*{T};
(10,-5)*{\gamma};
(30,-5)*{1-\delta};
(40,-5)*{\delta};
(-7,10)*{\gamma};
(-7,30)*{1-\delta};
(-7,40)*{\delta};
\endxy
\]
\caption{Division of the region of integration into rectangles and a triangle.}\label{fig:integral-division}
\end{figure}

The integrals over rectangles are straightforward; we have already considered the integral over a triangle in (\ref{eq:iterated-integral}). Putting everything together, we get
\[ q_2(\gamma, \delta) = \log {1-\delta \over \gamma} \log {\delta^2 \over (1-\delta) \gamma} - \log \delta \log(1-\delta) - Li_2(1-\delta) + Li_2(\delta) + (\log(1-\delta))^2 \]
if $\gamma + \delta < 1$, and
\[ q_2(\gamma, \delta) = - \log \gamma \log (1-\gamma) - Li_2(\gamma) + Li_2(1-\gamma) + (\log \gamma)^2 \]
if $\gamma + \delta \ge 1$.  

Finally, from these formulas for the $q_i$ we can obtain formulas for the $p_i$ using (\ref{eq:q-to-p}).

We now specialize to the case $\delta = 1$.  Fix some notation: let $Q_k(\gamma) = q_k(\gamma, 1)$ and let $P_k(\gamma) = p_k(\gamma, 1)$.  Then we get the formulas
\begin{equation}\label{eq:Q-formulas} Q_0(\gamma) = 1, Q_1(\gamma) = -\log \gamma, Q_2(\gamma) = -\log \gamma \log(1-\gamma) - Li_2(\gamma) + Li_2(1-\gamma) + (\log \gamma)^2 \end{equation}
from which we can derive formulas for the $P_i(\gamma)$.  These formulas give the limiting probability of having $0, 1$ or $2$ cycles longer than length $\gamma n$ in a permutation of length $n$, for $\gamma \in [1/3, 1/2]$; these probabilities are analytic functions of $\gamma$ in that interval.  

In particular, we consider
\[ P_1(\gamma) = -\log(\gamma) + \log(\gamma) \log(1-\gamma) - \log(\gamma)^2 + Li_2(\gamma) - Li_2(1-\gamma) \]
which applies over $\gamma = [1/3, 1/2]$.  We have $P_1(\gamma) = -\log(\gamma)$ for $1/2 \le \gamma \le 1$; the probabilities $P_i(\gamma)$ for $i = 0, 1, 2$ are shown in Figure \ref{fig:P-plot}.  Differentiating with respect to $\gamma$ gives
\[ {d \over d \gamma} P_1(\gamma) = {-1 \over \gamma} + {2 \log (1-\gamma) \over \gamma} - {2 \log \gamma \over \gamma} \]
where we have used the fact ${d \over dz} Li_2(z) = \log(z)/(1-z)$, which follows from the integral definition (\ref{eq:li2-def}).  Solving for $P_1^\prime(\gamma)=0$ gives $\gamma_0 = (1+e^{1/2})^{-1} = 0.3775\ldots$.  This is the value of $\gamma$ that maximizes the probability of having {\it exactly} one cycle of normalized length longer than $\gamma$.  We note that it is close to the value of $e^{-1} = 0.3678\ldots$ that might be naively expected, since the expected number of cycles longer than $n/e$ is $-\log 1/e = 1$.    We have $P_1(\gamma_0) = 0.8285\ldots, P_0(\gamma_0) = 0.0987\ldots, P_2(\gamma_0) = 0.0728\ldots$; thus {\it most} permutations of $[n]$ have {\it exactly one} cycle longer than $\gamma_0 n$.  One might expect the limiting distribution of the number of cycles longer than $n/e$ (or $\gamma_0 n$) to be Poisson, or at least quasi-Poisson, but the distribution of the number of long cycles is much more strongly peaked.  This is because not only is it impossible to have three or more such long cycles, but it is difficult to fit even two; thus to achieve a mean near 1, the value 1 must actually occur quite often. 

\begin{figure}\includegraphics[width=0.5\textwidth]{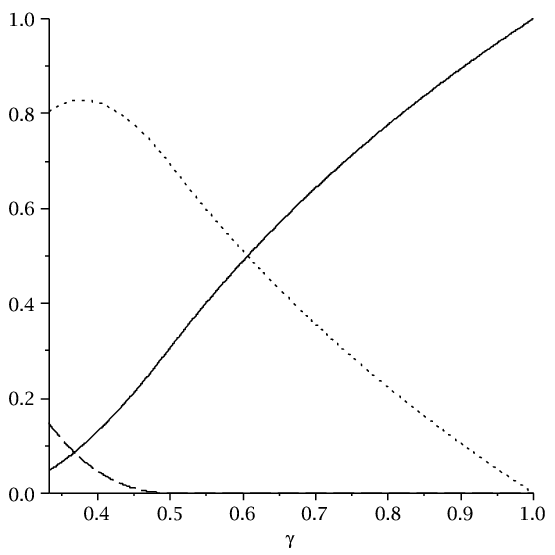} \caption{$P_0(\gamma)$ (solid), $P_1(\gamma)$ (dotted), $P_2(\gamma)$ (dashed) for $1/3 \le \gamma \le 1$.  }\label{fig:P-plot} \end{figure}

Finally, we consider some analytic properties of the functions $Q_k$.
\begin{prop}\label{prop:small-simplex}  We have
\[ \lim_{\gamma \to (1/k)^-} {Q_k(\gamma) \over (1-k\gamma)^k} = {k^k \over k!}. \]
\end{prop}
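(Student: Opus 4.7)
The plan is to return to the integral representation of Theorem \ref{thm:main} and analyze what happens as the integration region degenerates. With $\delta = 1$ and $r = k$, we have
\[ Q_k(\gamma) = \int_{R(\gamma)} \frac{1}{z_1 \cdots z_k} \: dz_1 \cdots dz_k, \qquad R(\gamma) = \{(z_1,\ldots,z_k) \in [\gamma,1]^k : z_1+\cdots+z_k \le 1\}. \]
As $\gamma \to (1/k)^-$, the minimum possible value of $z_1+\cdots+z_k$ on $[\gamma,1]^k$, namely $k\gamma$, approaches $1$. So $R(\gamma)$ is being squeezed into a small neighborhood of the corner $(1/k,\ldots,1/k)$.

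The next step is a change of variables that isolates this corner. Set $\epsilon = 1 - k\gamma$ and substitute $z_i = \gamma + u_i$. Then $u_i \ge 0$, and the sum constraint becomes $u_1+\cdots+u_k \le \epsilon$. The upper box constraint $z_i \le 1$, i.e.\ $u_i \le 1-\gamma$, is automatically satisfied once $\epsilon < 1-\gamma$, which certainly holds for $\gamma$ sufficiently close to $1/k$. Hence for such $\gamma$ the region $R(\gamma)$ is exactly the standard simplex $S_\epsilon = \{u \in \mathbb{R}_{\ge 0}^k : u_1+\cdots+u_k \le \epsilon\}$, whose volume is $\epsilon^k/k!$.

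Now I would estimate the integrand uniformly on $S_\epsilon$. For $(u_1,\ldots,u_k) \in S_\epsilon$ each $z_i$ lies in the interval $[\gamma,\gamma+\epsilon] = [(1-\epsilon)/k,(1+(k-1)\epsilon)/k]$, so $z_i \to 1/k$ uniformly as $\epsilon \to 0$, and therefore $1/(z_1\cdots z_k) \to k^k$ uniformly on $S_\epsilon$. Combining this with the volume computation,
\[ Q_k(\gamma) = \int_{S_\epsilon} \frac{1}{z_1 \cdots z_k} \: du_1 \cdots du_k = \bigl(k^k + o(1)\bigr) \cdot \frac{\epsilon^k}{k!}. \]
Dividing by $(1-k\gamma)^k = \epsilon^k$ and letting $\gamma \to (1/k)^-$ yields the claimed limit $k^k/k!$.

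There is no genuine obstacle here: once the change of variables $z_i = \gamma + u_i$ is made, the argument is a routine Laplace-type estimate because the integrand is continuous and bounded near the corner, and the region is an explicit simplex whose volume we know in closed form. The only point requiring a line of care is verifying that the upper box constraint becomes inactive in the limit, which is what turns $R(\gamma)$ into a clean simplex and makes the factor $k^k/k!$ appear.
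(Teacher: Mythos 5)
Your proposal is correct and is essentially the paper's own argument: both identify the region of integration as a $k$-simplex of volume $(1-k\gamma)^k/k!$ and observe that the integrand is uniformly close to $k^k$ there (the paper phrases this as a two-sided bound between the integrand's values at the vertex and at the center of the opposite face, then squeezes). The change of variables $z_i = \gamma + u_i$ is a cosmetic difference only.
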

\begin{proof} Observe that
 \[ Q_k(\gamma) = \int \cdots \int {1 \over z_1 \cdots z_k} \: dz_1 \cdots dz_k \]
where the integral is over $z_1, \ldots, z_k > \gamma, z_1 + \cdots + z_k < 1$.  The region of integration is a right $k$-simplex with vertex at $(\gamma, \gamma, \ldots, \gamma)$; each side parallel to a coordinate axis has length $1-k\gamma$, so its volume is $(1-k\gamma)^k/k!$.  The integrand, in this region, is bounded below by $\gamma^{-k}$, its value at the vertex of the simplex.  It is bounded above by $k^k$, its value at the center of the opposing face.  Therefore
\[ {\gamma^{-k} \over k!} \le {Q_k(\gamma) \over (1-k\gamma)^k} \le {k^k \over k!} \]
and taking limits as $\gamma \to (1/k)^-$ gives the desired result.
\end{proof}

\begin{prop}\label{prop:Q-recurrence} The functions $Q_k$ satisfy the recurrence
 \begin{equation}\label{eq:Q-recurrence} Q_{k+1}(\gamma) = \int_\gamma^{1-k\gamma} {1 \over z} Q_k \left( {\gamma \over 1-z} \right) \: dz \end{equation}
for $\gamma < 1/(k+1)$, and $Q_{k+1}(\gamma) = 0$ otherwise, with the initial condition $Q_0(\gamma) = 1$.
\end{prop}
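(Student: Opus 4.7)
The plan is to start from the integral representation of $Q_{k+1}$ given by Theorem \ref{thm:main} specialized to $\delta = 1$, namely
\[ Q_{k+1}(\gamma) = \int \cdots \int_{\substack{z_i \ge \gamma \\ z_1 + \cdots + z_{k+1} \le 1}} \frac{1}{z_1 \cdots z_{k+1}} \, dz_1 \cdots dz_{k+1}, \]
and to peel off the last variable by Fubini. Fixing $z_{k+1} = z$, the inner $k$-dimensional integral is taken over the region $\{z_1, \ldots, z_k \ge \gamma, \; z_1 + \cdots + z_k \le 1 - z\}$.

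Next I would rescale the inner integral to put it in standard $Q_k$ form. The substitution $y_i = z_i/(1-z)$ turns the constraint $\sum z_i \le 1-z$ into $\sum y_i \le 1$, turns $z_i \ge \gamma$ into $y_i \ge \gamma/(1-z)$, and gives $dz_1 \cdots dz_k = (1-z)^k \, dy_1 \cdots dy_k$ while $\prod_i z_i^{-1} = (1-z)^{-k}\prod_i y_i^{-1}$. The two factors of $(1-z)^k$ cancel exactly, so the inner integral is precisely $Q_k\bigl(\gamma/(1-z)\bigr)$. This gives
\[ Q_{k+1}(\gamma) = \int \frac{1}{z} \, Q_k\!\left(\frac{\gamma}{1-z}\right) dz, \]
with the range of $z$ still to be determined.

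Finally I would work out the $z$-bounds. The outer variable must satisfy $z \ge \gamma$, and the slice over $(z_1, \ldots, z_k)$ is nonempty only when $k\gamma \le 1-z$, i.e.\ $z \le 1-k\gamma$; equivalently, $\gamma/(1-z) \le 1/k$, which is exactly the condition that the region defining $Q_k(\gamma/(1-z))$ be nonempty. For such a range of $z$ to itself be nonempty one needs $\gamma \le 1-k\gamma$, i.e.\ $\gamma \le 1/(k+1)$, matching the hypothesis of the recurrence; otherwise the $(k+1)$-dimensional region in the first display is empty and $Q_{k+1}(\gamma) = 0$. The initial condition $Q_0(\gamma) = 1$ corresponds to the empty product/empty region of integration and has already been recorded in \eqref{eq:Q-formulas}.

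I do not expect any serious obstacle: the only subtlety is bookkeeping on the limits of integration and confirming that the rescaling identifies the inner integral as $Q_k$ at the shifted argument $\gamma/(1-z)$. One small thing worth noting to justify Fubini cleanly is that the integrand is positive and the domain has finite measure (it sits inside $[\gamma,1]^{k+1}$), so the iterated integral and the multiple integral agree without further analytic hypotheses.
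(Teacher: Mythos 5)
Your proposal is correct and follows essentially the same route as the paper: apply Theorem \ref{thm:main} with $\delta = 1$, peel off the last variable, and rescale by $1/(1-z)$ so that the Jacobian $(1-z)^k$ cancels the $(1-z)^{-k}$ from the integrand, identifying the inner integral as $Q_k(\gamma/(1-z))$. Your handling of the bounds and the emptiness condition matches the paper's, and your explicit remark justifying Fubini is a small bonus the paper omits.
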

Note that this recovers the previous formulas (\ref{eq:Q-formulas}) for $Q_1$ and $Q_2$.  
\begin{proof} We have $Q_0(\gamma) = 1$ since the zeroth falling power is identically 1.  To derive the recurrence, we recall the result of Theorem \ref{thm:main}.  This gives
\[ Q_{k+1}(\gamma) = \int {1 \over z_1 \cdots z_{k+1}} dz_1 \cdots dz_{k+1} \]
where the integral is over the region $z_1 + \cdots + z_{k+1} \le 1, z_i \in [\gamma, 1]$.  The region of integration is a sliced $(k+1)$-cube, i. e. that part of a $(k+1)$-cube lying below a plane $z_1 + \ldots + z_{k+1} = c$; all its $k$-dimensional cross-sections are themselves sliced $k$-cubes.  We pull out $z_{k+1}$ to get a single integral of a $k$-fold integral,
\begin{equation}\label{eq:iterated-integral-1}  Q_{k+1}(\gamma) = \int_\gamma^{1-k\gamma} {1 \over z_{k+1}} \left[ \int {1 \over z_1 \cdots z_k} \: dz_1 \ldots dz_k \right] dz_{k+1} \end{equation}
where the inner integral is over the sliced cube $z_1 + \ldots + z_k \le 1-z_{k+1}, z_i \in [\gamma, 1]$.  To explain the upper bound on the outer integral, note that the region of integration in the inner integral is empty if $z_{k+1} \ge 1-k\gamma$.  

In (\ref{eq:iterated-integral-1}), make the change of variables $w_j = z_j/(1-z_{k+1})$ for $j = 1, \ldots, k$.  This gives 
\begin{equation}\label{eq:iterated-integral-2} Q_{k+1}(\gamma) = \int_\gamma^{1-k\gamma} {1 \over z_{k+1}} \left[ \int {1 \over w_1 \cdots w_k} \: dw_1 \cdots dw_k \right] \: dz_{k+1} \end{equation}
where the inner integral is over the simplex $w_1 + \cdots + w_k \le 1, w_i \in [\gamma/(1-z_{k+1}), 1/(1-z_{k+1})]$.  But in fact none of the $w_i$ can exceed 1, since they are all positive and their sum is at most 1.  Thus the inner integral in (\ref{eq:iterated-integral-2}) is exactly $Q_k(\gamma/(1-z))$, which yields (\ref{eq:Q-recurrence}). \end{proof}

\begin{cor} For each $k \ge 1$, $Q_k(\gamma)$ is a $C^\infty$ function on $(0,1)$, except that it is $C^{k-1}$ but not $C^k$ at $\gamma = 1/k$. \end{cor}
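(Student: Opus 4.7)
The plan is to factor out the vanishing behavior of $Q_k$ at $\gamma = 1/k$ explicitly and then verify the smoothness of the remaining piece. Note first that for $\gamma > 1/k$, the region defining $Q_k(\gamma)$ in Theorem \ref{thm:main} is empty (each $z_i \ge \gamma$ forces $\sum z_i > 1$), so $Q_k \equiv 0$ there, trivially $C^\infty$. For $\gamma \in (0, 1/k]$, I would apply the affine change of variables $z_i = \gamma + (1 - k\gamma) u_i$, which maps the region bijectively onto the standard simplex $\Delta_k = \{u_i \ge 0,\ \sum u_i \le 1\}$ with Jacobian $(1 - k\gamma)^k$. This produces the factorization
\[ Q_k(\gamma) = (1 - k\gamma)^k F_k(\gamma), \qquad F_k(\gamma) := \int_{\Delta_k} \prod_{i=1}^k \frac{1}{\gamma + (1 - k\gamma) u_i}\, du_1 \cdots du_k. \]

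Next I would verify the key smoothness lemma: $F_k$ is $C^\infty$ on $(0, 1/(k-1))$ (interpreting $1/0 = \infty$ when $k = 1$), and in particular on a two-sided neighborhood of $\gamma = 1/k$. Since $\gamma + (1-k\gamma) u_i$ is linear in $u_i$ with values $\gamma$ and $1-(k-1)\gamma$ at the endpoints $u_i \in \{0,1\}$, both positive on $(0, 1/(k-1))$, the denominator is bounded below by a positive constant uniformly in $u \in \Delta_k$ on any compact subinterval. Hence the integrand and all its $\gamma$-derivatives are uniformly bounded there, and differentiation under the integral sign to arbitrary order yields $F_k \in C^\infty$.

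Finally, I would assemble the two ingredients via the Leibniz rule. On $(0, 1/k)$, $Q_k$ is a product of smooth functions, hence $C^\infty$. At $\gamma = 1/k$, write
\[ Q_k^{(j)}(\gamma) = \sum_{i=0}^{j} \binom{j}{i}\, \frac{d^i}{d\gamma^i}(1 - k\gamma)^k \cdot F_k^{(j-i)}(\gamma). \]
For $i \le k$, the factor $\frac{d^i}{d\gamma^i}(1-k\gamma)^k$ is a nonzero constant multiple of $(1-k\gamma)^{k-i}$, and it vanishes identically for $i > k$. Hence for $j \le k-1$ every summand carries a factor $(1-k\gamma)^{k-i}$ with $k-i \ge 1$, which vanishes at $\gamma = 1/k$; this matches the right-hand value $Q_k^{(j)}(1/k^+) = 0$, giving $Q_k \in C^{k-1}$ at $1/k$. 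For $j = k$, only the $i = k$ term survives, with value $k!(-k)^k F_k(1/k) = (-1)^k k^{2k}$, using $F_k(1/k) = k^k/k!$ (either directly from evaluating the integrand at $\gamma = 1/k$, or equivalently from Proposition \ref{prop:small-simplex}). This is nonzero, so $Q_k^{(k)}$ has a nonzero jump at $1/k$ and $Q_k \notin C^k$ there. The main obstacle is the uniform boundedness needed to differentiate $F_k$ to all orders, which is handled cleanly by the lower bound on the denominator; the edge case $k=1$ should be read as the boundary statement that $Q_1 = -\log \gamma$, extended by $0$ for $\gamma \ge 1$, is continuous but not differentiable at $\gamma = 1$, and it follows from the same argument.
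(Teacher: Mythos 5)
Your proof is correct, and it takes a genuinely different route from the paper's. The paper argues by induction on $k$ using the recurrence of Proposition \ref{prop:Q-recurrence} (smoothness of $Q_{k+1}$ on $(0,1/(k+1))$ as an integral with smooth integrand and limits, $Q_{k+1}\equiv 0$ above $1/(k+1)$, and then Proposition \ref{prop:small-simplex} to see that all one-sided derivatives through order $k$ vanish at $1/(k+1)$ while the $(k+1)$st do not). You instead give a direct, non-inductive argument: the affine rescaling of the integration simplex onto $\Delta_k$ isolates the singular factor explicitly as $Q_k(\gamma)=(1-k\gamma)^k F_k(\gamma)$ with $F_k$ smooth on a two-sided neighborhood of $1/k$ (indeed on $(0,1/(k-1))$), after which Leibniz makes both the $C^{k-1}$ matching and the jump in the $k$th derivative, $(-1)^k k^{2k}$ versus $0$, completely transparent; your evaluation $F_k(1/k)=k^k/k!$ also recovers Proposition \ref{prop:small-simplex} as a byproduct. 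Your approach buys something concrete: the paper's inductive step quietly evaluates $Q_k(\gamma/(1-z))$ at the argument $1/k$ when $z$ reaches the upper limit $1-k\gamma$, which is exactly where $Q_k$ fails to be $C^\infty$, so the assertion that the integrand is $C^\infty$ needs the order-$k$ vanishing of $Q_k$ there to be justified; your factorization avoids that delicacy entirely. The only point worth spelling out in a final write-up is the standard fact that matching one-sided limits of the $j$th derivatives at $1/k$ (for $j\le k-1$), together with smoothness on either side, yields genuine $C^{k-1}$ regularity there (e.g.\ by the mean value theorem applied iteratively).
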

\begin{proof} We proceed by induction.  Note that for $\gamma \in (0,1)$, we have $Q_1(\gamma) = \int_\gamma^1 {1 \over z} \: dz = -\log \gamma$, and $Q_1(\gamma) = 0$ for $\gamma \ge 1$.   Thus $Q_1(\gamma)$ is $C^\infty$ except at 1, where it is $C^0$. 

Now, assume $Q_k(\gamma)$ is $C^\infty$ on $(0,1)$, except that it is $C^{k-1}$ but not $C^k$ at $\gamma = 1/k$.  Then we have
\[ Q_{k+1}(\gamma) = \int_\gamma^{1-k\gamma} {1 \over z} Q_k \left( {\gamma \over 1-z} \right) \: dz \]
for $\gamma \in (0, 1/(k+1))$.  This is the integral of a $C^\infty$ function between limits that are $C^\infty$ in $\gamma$; thus it is $C^\infty$.  For $\gamma > 1/(k+1)$, we have $Q_{k+1}(\gamma) = 0$ from Proposition \ref{prop:Q-recurrence} so $Q_{k+1}(\gamma) = 0$ on $(1/(k+1), 1)$ and the function is $C^\infty$ there.  Finally, we observe from Proposition \ref{prop:small-simplex} that $Q_{k+1}(\gamma)$ is $C^k$ but not $C^{k+1}$ at $1/(k+1)$ -- the $k$th derivatives and all lower derivatives on either side of $1/(k+1)$ are both zero, but the $(k+1)$st derivatives differ. 
\end{proof}

\begin{cor} $P_j(\gamma)$ is $C^{k-1}$ but not $C^k$ at $1/k$ for all $k > j$, and $C^\infty$ elsewhere. \end{cor}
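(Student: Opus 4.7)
The strategy is to reduce everything to the previous corollary on $Q_k$ by writing each $P_j$ as an explicit finite linear combination of the $Q_i$. Since for each $\gamma \in (0,1)$ the random variable $X^{(n)}$ of cycles of length in $[\gamma n, n]$ has support contained in $\{0,\ldots,\lfloor 1/\gamma\rfloor\}$ for large $n$, the matrix relating its probabilities $(P_j)$ to its factorial moments $(Q_i)$ is exactly the invertible triangular matrix $A_r$ used in the proof of Theorem \ref{thm:quasi-poisson-values}. Inverting it via Lemma \ref{lem:binomial-matrix} gives
\[ P_j(\gamma) \;=\; \sum_{i \ge j} \binom{i}{j}\frac{(-1)^{i-j}}{i!}\, Q_i(\gamma), \]
and the sum is pointwise finite because $Q_i \equiv 0$ on $(1/i,1)$.

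Next I would verify the $C^\infty$ assertion at non-singular points. If $\gamma_0 \in (0,1)$ is not of the form $1/i$ for any positive integer $i$, each summand is $C^\infty$ in a neighborhood of $\gamma_0$: those with $1/i > \gamma_0$ are $C^\infty$ by the previous corollary (we are strictly to the left of the singular point of $Q_i$), and those with $1/i < \gamma_0$ vanish identically nearby. Hence $P_j$ is $C^\infty$ at $\gamma_0$.

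For $\gamma_0 = 1/k$ with $k > j$, I would work in a neighborhood of $1/k$ contained in $(1/(k+1),\,1/(k-1))$. On such a neighborhood the terms with $i > k$ are identically zero (since $\gamma > 1/(k+1) \ge 1/i$), and the terms with $j \le i < k$ are $C^\infty$ (since $\gamma$ lies strictly to the left of $1/i$, which is the only non-smooth point of $Q_i$). Only the $i=k$ term carries a singularity, so
\[ P_j(\gamma) \;=\; \binom{k}{j}\frac{(-1)^{k-j}}{k!}\, Q_k(\gamma) \;+\; (\text{function $C^\infty$ near $1/k$}). \]
The coefficient is a nonzero constant, so $P_j$ has exactly the same regularity at $1/k$ as $Q_k$, namely $C^{k-1}$ but not $C^k$ by the previous corollary.

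The only step requiring any real thought is the inversion formula, and it comes essentially free: the proof of Theorem \ref{thm:quasi-poisson-values} never used the specific quasi-Poisson structure of $\lambda^i$, only the invertibility of the factorial-power matrix on a finite support, which holds here. After that everything is local bookkeeping of which $Q_i$ terms vanish or are smooth in a given neighborhood, so I do not expect any serious obstacle.
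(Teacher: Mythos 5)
Your proposal is correct and follows essentially the same route as the paper: after reindexing, your inversion formula $P_j(\gamma) = \sum_{i\ge j}\binom{i}{j}\frac{(-1)^{i-j}}{i!}Q_i(\gamma)$ is exactly the identity $P_k = \frac{1}{k!}\sum_{m\ge 0}\frac{(-1)^m}{m!}Q_{k+m}$ that the paper uses, and the conclusion is then read off from the regularity of the $Q_i$. Your version is in fact more careful than the paper's one-line argument, since you justify the pointwise finiteness of the sum and check locally that only the $i=k$ term is singular near $1/k$ with a nonzero coefficient.
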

\begin{proof} We have 
 \[ P_k(\gamma) = {1 \over k!} \sum_{j=0}^\infty {(-1)^j \over j!} Q_{k+j}(\gamma) \]
and so the non-$C^\infty$ points of $P_k$ are exactly those of $Q_k, Q_{k+1}, \ldots$.
\end{proof}

It would be interesting to derive, from the recurrence formula in Proposition \ref{prop:Q-recurrence} or otherwise, more numerical results about the $Q_i$ or $P_i$ -- for example, for which $\gamma$ is $P_2(\gamma)$ maximized?  (Related number-theoretic functions, such as the Buchstab and Dickman functions, can be computed, but clever numerical tricks are necessary; see \cite[Ch. 5]{W} and the references therein.)

Granville, inspired by results in number theory, has shown \cite[Theorem 5]{G} that the proportion of permutations of $[n]$ with all cycles having length at least $\alpha n$ is given by
\[
{\omega(\alpha^{-1}) \over \alpha n} + O \left( {\log \log n \over n^2} \right)
\]
where $\omega$ is the Buchstab function, given by
\[
\omega(u) = 1/u \text{ for $1 \le u \le 2$}, \omega(u) = {1 \over u} \int_1^{u-1} \omega(t) \: dt \text{ for $u \ge 2$}.
\]
Proposition \ref{prop:Q-recurrence} is reminiscent of this result, and of similar results on the prime factorizations of integers.  However, classical number-theoretic results in this vein have focused on the numbers of integers near $n$ with all factors in some fixed normalized interval $[n^\gamma, n^\delta]$.  The case $\gamma = 0$ (that is, integers with all factors less than $n^\delta$) was considered by Dickman \cite{D}, and that of $\delta = 1$ (all factors greater than $n^\gamma$) by Buchstab \cite{B}; the general case was treated by Friedlander \cite{F}.  Wolczuk's thesis \cite{W} compiles many results on the Buchstab function.  There do not seem to be results considering the probability that an integer near $n$ has a specified {\it number} of prime factors in $[n^\gamma, n^\delta]$, which would be the number-theoretic analogue of the results given here.  Similarly, results on the sizes of components of combinatorial structures have in general focused on the lengths of the longest or shortest components. Shepp and Lloyd looked at the longest cycles of permutations \cite{SL}; more recent work of Panario and Richmond \cite{PR1, PR2} has extended this to smallest and largest components of more general decomposable structures.

The distributions explored here were first encountered during the writing of \cite{L}.  That paper considers the cycle structure of permutations chosen uniformly at random from those with all cycle lengths odd, or all even, or from the Ewens distribution with parameter $\sigma$.  (The Ewens distribution \cite{E} assigns weight $\sigma^{c(\pi)}$ to a permutation $\pi$, where $c(\pi)$ is the number of cycles of $\pi$, and chooses each permutation with probability proportional to its weight.  In the case of the Ewens distribution, the following conjecture seems reasonable:
\begin{conj} The expected number of cycles of length in $[\gamma n, \delta n]$ of a permutation of $[n]$ chosen from the Ewens distribution approaches
 \[ \lambda = \int_\gamma^\delta {1 \over x} (1-x)^{\sigma-1} \: dx \]
as $n \to \infty$.  Furthermore, in the case where $1/(k+1) \le \gamma < \delta < 1/k$ for some positive integer $k$, the distribution of the number of cycles converges in distribution to quasi-Poisson$(k,\lambda)$. 
\end{conj}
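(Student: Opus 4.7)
I would mimic the moment-based argument of Theorems \ref{thm:main} and \ref{thm:box-integral} under the Ewens measure. The first step is an Ewens analogue of Proposition \ref{prop:joint-moments}: by the exponential formula, the mixed generating function weighting each permutation by $\sigma^{c(\pi)}$ and tracking cycles of distinct lengths $k_1,\ldots,k_s$ is
\[ P^{(\sigma)}(z, u_1, \ldots, u_s) = (1-z)^{-\sigma} \exp\!\left(\sigma \sum_{i=1}^s (u_i-1)\frac{z^{k_i}}{k_i}\right), \]
with total Ewens mass $n!\,[z^n](1-z)^{-\sigma}$. Differentiating $r_i$ times in $u_i$, setting $u_i=1$, and dividing by the total mass yields, for $M:=\sum k_i r_i \le n$,
\[ \E_\sigma\!\left(\prod_{i=1}^s (X^{(n)}_{k_i})_{r_i}\right) = \prod_{i=1}^s\!\left(\frac{\sigma}{k_i}\right)^{r_i}\cdot\frac{[z^{n-M}](1-z)^{-\sigma}}{[z^n](1-z)^{-\sigma}}. \]
The ratio on the right is $\binom{n-M+\sigma-1}{n-M}/\binom{n+\sigma-1}{n}$, which by the elementary asymptotic $\binom{n+\sigma-1}{n}\sim n^{\sigma-1}/\Gamma(\sigma)$ is $(1-M/n)^{\sigma-1}(1+o(1))$ uniformly for $M/n$ bounded away from $1$.

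Next I would follow the proof of Theorem \ref{thm:main} verbatim: expand $(X^{(n)})_r$ by the multinomial theorem for falling powers, take expectations termwise, and recognise the resulting sum as a Riemann sum for an $r$-fold integral. The conclusion is the Ewens analogue
\[ \lim_{n\to\infty}\E_\sigma(X^{(n)})_r = \int_{\substack{z_1,\ldots,z_r\in[\gamma,\delta]\\ z_1+\cdots+z_r\le 1}} \frac{\sigma^r(1-z_1-\cdots-z_r)^{\sigma-1}}{z_1\cdots z_r}\,dz_1\cdots dz_r. \]
Setting $r=1$ proves the first-moment statement, modulo an overall factor of $\sigma$ in the integrand that I suspect is a typographical omission from the displayed $\lambda$: a direct finite-$n$ check (e.g.\ $\sigma=2$, $n=3$ gives $\E_\sigma X_1^{(3)}=3/2$) shows $\E_\sigma X_1^{(n)}\to\sigma$ rather than $1$. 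Under the hypothesis $\tfrac{1}{k+1}\le\gamma<\delta<\tfrac{1}{k}$ and for $r\le k$ one has $r\delta<1$, so the constraint $\sum z_i\le 1$ is automatic on $[\gamma,\delta]^r$ and the integral is over the full box; for $r\ge k+1$ the same hypothesis forces $r\gamma\ge 1$, so the region is of measure zero and the factorial moment vanishes, which already pins the limit to be supported on $\{0,1,\ldots,k\}$.

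By the moment-convergence criterion invoked before Theorem \ref{thm:box-integral}, the quasi-Poisson conclusion then reduces to showing that the $r$-fold integral above equals $\lambda^r$ for $r=0,1,\ldots,k$. This factorisation is the step I expect to be the main obstacle: unlike the uniform case $\sigma=1$, the factor $(1-\sum z_i)^{\sigma-1}$ couples the integration variables and blocks the one-variable separation that made Theorem \ref{thm:box-integral} immediate, and a direct numerical evaluation (for example $\sigma=2$, $r=2$, $\gamma=0.3$, $\delta=0.4$) indicates that the second factorial moment is strictly smaller than $\lambda^2$. Any proof of the quasi-Poisson assertion as stated would therefore require either a non-trivial integral identity specific to this regime or a structural argument via the Poisson--Dirichlet limit of the normalised Ewens cycle lengths, whose $r$th correlation function is precisely $\sigma^r(1-\sum z_i)^{\sigma-1}/(z_1\cdots z_r)$. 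Even in the absence of such an identity, the method above produces an explicit limit distribution on $\{0,1,\ldots,k\}$ by inverting the linear system of Theorem \ref{thm:quasi-poisson-values} with the non-factorised moments above, suggesting that the first half of the conjecture can be established cleanly while the quasi-Poisson half likely needs refinement for $\sigma\neq 1$.
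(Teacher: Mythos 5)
The statement you were asked to prove is presented in the paper only as a \emph{conjecture}; no proof is given, so there is no argument of the author's to compare yours against. Your plan is the natural one and the setup is correct: the Ewens analogue of Proposition \ref{prop:joint-moments} is exactly as you write it (each $u_i$-differentiation brings down $\sigma z^{k_i}/k_i$, and the leftover ratio of coefficients of $(1-z)^{-\sigma}$ produces the factor $(1-M/n)^{\sigma-1}(1+o(1))$), and running the Riemann-sum argument of Theorem \ref{thm:main} then gives $\lim_n \E_\sigma(X^{(n)})_r = \int \sigma^r (1-z_1-\cdots-z_r)^{\sigma-1}(z_1\cdots z_r)^{-1}\,dz$ over the sliced box; these are precisely the correlation functions of the Poisson--Dirichlet($\sigma$) limit of the normalized cycle lengths. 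More importantly, both of your critical observations are right and can be made exact rather than numerical. First, the conjectured $\lambda$ is indeed missing a factor of $\sigma$: the $r=1$ integral is $\sigma\int_\gamma^\delta x^{-1}(1-x)^{\sigma-1}\,dx$, and this is forced by consistency with the paper's own citation of \cite[Cor. 2.10]{L}, since the expected fraction of \emph{elements} in such cycles is $\int_\gamma^\delta \sigma(1-x)^{\sigma-1}\,dx=(1-\gamma)^\sigma-(1-\delta)^\sigma$. Second, the quasi-Poisson claim genuinely fails for $\sigma\ne 1$: taking $\sigma=2$ and $r=2$, the identity $(1-z_1)(1-z_2)-(1-z_1-z_2)=z_1z_2$ gives $\lambda^2-\lim_n\E_\sigma(X^{(n)})_2=\sigma^2(\delta-\gamma)^2>0$ exactly, so the limit law is supported on $\{0,\ldots,k\}$ but its factorial moments are not $\lambda^r$.

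What remains to turn your plan into a proof of the (corrected) first half is routine: uniformity of the asymptotic $\binom{n-M+\sigma-1}{n-M}/\binom{n+\sigma-1}{n}\sim(1-M/n)^{\sigma-1}$ for $M/n$ in the relevant compact range (automatic here since $r\delta<1$), plus the Riemann-sum convergence, which is no less rigorous than the paper's own proof of Theorem \ref{thm:main}. You should therefore present your work not as a proof of the conjecture but as a proof of its first assertion with $\lambda$ replaced by $\sigma\int_\gamma^\delta x^{-1}(1-x)^{\sigma-1}\,dx$, together with a disproof of the quasi-Poisson assertion for $\sigma\neq 1$; your closing suggestion --- that the correct limit law is the one obtained by inverting the system of Theorem \ref{thm:quasi-poisson-values} against the non-factorized moments --- is the right replacement statement.
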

Furthermore, in \cite[Cor. 2.10]{L} it is shown that the proportion of elements of a random permutation of $n$ selected according to the Ewens distribution which are in cycles of length in $[\gamma n, \delta n]$ approaches $(1-\gamma)^\sigma - (1-\delta)^\sigma$ as $n$ gets large.  It is also known that the same is true for permutations of $n$ selected uniformly from all those with all cycle lengths even, or from all those with all cycle lengths odd \cite[Thm. 3.5]{L}.  It seems reasonable to conjecture that this correspondence should hold at least so far as to give that these classes of permutations satisfy the previous conjecture with $\sigma = 1/2$.   Similar distributions also may be obtained for other combinatorial structures in which components have size comparable with the size of the entire structure, including the so-called logarithmic combinatorial structures \cite{ABT}. 

{\it Acknowledgments.} Mirko Visontai pointed out that Lemma \ref{lem:binomial-matrix} is well-known and provided the reference to Stanley's text.  Correspondence with Warren Ewens motivated the conjecture in the conclusion.

\end{document}